\theoremstyle{definition} \newtheorem{defn}{Definition}
\theoremstyle{definition} \newtheorem{thm}{Theorem}
\theoremstyle{definition} 
\theoremstyle{definition} \newtheorem*{remark}{Remark}
\theoremstyle{plain} \newtheorem{example}{Example}
\theoremstyle{definition} \newtheorem*{cor}{Corollary}
\theoremstyle{definition} \newtheorem{construction}{Construction}
\begin{document}

\author{D.Fletcher}
\title{Aperiodic tilings with one prototile and low complexity atlas matching rules}
\maketitle

\begin{abstract}
We give a constructive method that can decrease the number of prototiles needed to tile a space.
We achieve this by exchanging edge to edge matching rules for a small atlas of permitted patches.
This method is illustrated with Wang tiles, and we apply our method to present via these rules a single prototile that can only tile $\mathbb{R}^3$ aperiodically, and a pair of square tiles that can only tile $\mathbb{R}^{2}$ aperiodically.

\end{abstract}

\section{Introduction}
The field of aperiodic tilings was created by Berger's discovery \cite{BERGER} of a set of $20426$ square tiles which were \emph{strongly aperiodic} in the sense that they could only tile the plane in a non-repeating global structure. Unsurprisingly there has been some interest on how far this number could be decreased. The number of tiles was reduced over time, to $6$ square tiles by Robinson \cite{ROBIN} in $1971$, and, relaxing to non-square tiles, to $2$ tiles by Penrose \cite{PENROSE} two years later. This led naturally to serious consideration about the possible existence of a single tile.
While a simple example has not been forthcoming, if we relax the requirement that the monotile be completely defined by its shape alone, there has been progress.

In \cite{SOCOLAR} Socolar studied a more general problem, \lq $k$-isohedral' monotiles, which had the monotile as a limiting case. Relaxing conditions on edge-coloring, non-connected tiles or space-filling provided positive results, but not in the limit.
In 1996 Gummelt \cite{GUMMELT} considered tiles that where allowed to overlap, and produced a decorated tile which could force strong aperiodicity.

This paper extends work by Goodman-Strauss on \lq atlas matching rules'. In \cite{STRAUSSSURVEY} Goodman-Strauss describes how by
requiring a tiling be covered by a suitable finite atlas of permitted bounded configurations, a domino can serve as a monotile. Sadly the atlas requires patches of extremely large radius.
This paper describes a method of altering matching rules from coloured tiles to atlas matching rules with very small patches. Furthermore if two of the tiles have the same shape, the number of prototiles needed is decreased.

We use this method to construct a pair of square tiles which tile $\mathbb{R}^{2}$ aperiodically, and a single cubic tile that tiles $\mathbb{R}^{3}$ aperiodically.

\section{The atlas matching rule construction}

We shall describe the basic definitions we will be using in this paper, drawing many of them from \cite{COHOMOBDHS}.
For clarity we will be limiting the spaces we are tiling to $\mathbb{R}^{n}$ for some $n \in \mathbb{N}$. With minor alterations the method will work in any homogeneous space (for example hyperbolic space $\mathbb{H}^{n}$).

Let $\mathcal{P}$ be a finite set of compact subsets of $\mathbb{R}^{n}$, each the closure of its interior. Denote these subsets as \emph{prototiles}.
Let $G$ be a group of isometries of $\mathbb{R}^{n}$, which includes all translations of $\mathbb{R}^{n}$. The groups we will be using most in this paper are the group of translations $G_{Tr}$ and the group of all isometries $G_{I}$.
For a given set of prototiles $\mathcal{P}$ and a group of isometries $G$, define a tile $t$ as the image $f(P)$, for some $f \in G$, $P \in \mathcal{P}$.
A \emph{patch} for $\mathcal{P}$ is a set of tiles with pairwise disjoint interiors and the \emph{support} of a patch
is the union of its tiles. A \emph{tiling} with prototiles $\mathcal{P}$ is a patch with
support $\mathbb{R}^{n}$. We shall refer to the support of a prototile $P$ as $supp(P)$.

We now want to introduce the notion of \lq decorating' a prototile, and hence all tiles produced from it.
Construct a function $g:\coprod_{P \in \mathcal{P}} P \to C$ where $C$ is a set containing a distinguished element, $0$ say, and possibly other elements.
A point $x$ in the prototile $P$ is \emph{c-coloured} if $g(x)=c$. We will refer to points that are $0$-coloured as \emph{uncoloured} points.

Extend $g$ to points of any given tile $t=f(P)$ by $g_{t}(x) = g f^{-1}(x)$ for each $x \in t$.

\begin{defn}
A coloured tiling $(T,g)$ satisfies the \emph{identical facet (matching) rule} if for all tiles $t_{1}$, $t_{2}$ (where $t_{1} \neq t_{2}$), $g_{t_{1}}(x)=g_{t_{2}}(x)$ for all $x \in t_{1} \cap t_{2}$.

\end{defn}

This covers cases where two tiles \lq match' if they have the same colour on the interior of their shared boundary (for example Wang tiles).
We will be using a slightly more general version of this rule in the rest of this paper, which allows tiles to match under wider conditions, as follows.

\begin{defn}
An \emph{facet (matching) rule} is a function $r: C \times C \mapsto \{0,1\}$ such that $r(x,y) = r(y,x)$ and $r(0,0)=1$.
A coloured tiling $(T,g)$ satisfies the \emph{facet (matching) rule} $r$ if for all tiles $t_{1}$, $t_{2}$ (where $t_{1} \neq t_{2}$),
\\
\\
 $r(g_{t_{1}}(x), g_{t_{2}}(x))=1$ for all $x \in t_{1} \cap t_{2}$.
\end{defn}

We describe below a way of translating from this style of matching rule to the following matching rule.

\begin{defn}
A tiling $T$ satisfies an \emph{atlas (matching) rule} $\mathcal{U}$ if there exists an atlas of patches $U \in \mathcal{U}$ such that for every tile $t \in T$, there exists a patch $n(t)$ about $t$ (with $t$ being in the strict interior of $n(t)$) such that $n(t)$ is a translation of some $U \in \mathcal{U}$.

Furthermore $\mathcal{U}$ must have a finite number of elements, and any patch in $\mathcal{U}$ must be compact.

A prototile set $\mathcal{P}$ satisfies the \emph{atlas (matching) rule} $\mathcal{U}$ if all tilings with prototiles $P$ satisfy the atlas matching rule $\mathcal{U}$.

In this paper, we will be using patches defined by the \lq 1-corona' about a tile $t$.
The \lq 1-corona' of a tile $t$ is the set of tiles touching $t$ (see \cite{STRAUSSSURVEY}).


\end{defn}

\begin{defn}
A tiling $T$ is a $(\mathcal{P},G,g,r)$\emph{-tiling} if it has a prototile set $\mathcal{P}$ with allowable isometries $G$ and colouring $g$, and satisfies the facet rule $r$.

A tiling $T$ is a $(\mathcal{X},G,\mathcal{U})$\emph{-tiling} if it has a prototile set $\mathcal{X}$ with allowable isometries $G$, and satisfies the atlas matching rule $\mathcal{U}$.

Two tilings are \emph{MLD} (mutually locally derivable) if one is obtained from the other in a unique way by local rules, and vice versa.
\end{defn}

\begin{thm}
A $(\mathcal{P},G_{Tr},g,r)$-tiling $T$ is MLD to a $(\mathcal{X},G_{I},\mathcal{U})$-tiling for some $1$-corona atlas rule $\mathcal{U}$ and a prototile set $\mathcal{X}$ with $|\mathcal{X}| \leq |\mathcal{P}|$.
\end{thm}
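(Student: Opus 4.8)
The plan is to realise each colour as a small geometric decoration of the facets, to take the atlas $\mathcal{U}$ to be exactly the collection of $1$-coronas that occur in genuine $(\mathcal{P},G_{Tr},g,r)$-tilings, and then to verify that decorating tiles and reading the decorations back off are mutually inverse \emph{local} procedures.

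First I would build the prototile set $\mathcal{X}$. For each $P\in\mathcal{P}$ I modify the boundary facet by facet: on the facet carrying colour $c$ I carve a fixed chiral pattern of small bumps and notches $\beta(c)$, chosen so that (i) distinct colours receive distinguishable patterns, so $c$ can be recovered from the decorated facet, and (ii) the bumps are small enough and placed so that the decorated prototile is still the closure of its interior and its support still tiles wherever the undecorated support did (recall that the facet rule $r$ only ever inspects colours on a shared facet $t_1\cap t_2$, so it suffices to encode the colouring there). Writing $X_P$ for the decorated prototile, I set $\mathcal{X}$ to be the $X_P$ taken up to isometry. Since at most one $X_P$ is produced per $P$ and isometric ones are identified, $|\mathcal{X}|\le|\mathcal{P}|$ is immediate; this identification is precisely where a genuine decrease can occur, namely when two prototiles of $\mathcal{P}$ agree up to an isometry of their shape together with their colouring.

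Next I would define the atlas. Because $\mathcal{P}$ is finite, only finitely many colours occur and the $1$-corona of a tile contains boundedly many tiles, so only finitely many $1$-coronas (up to translation) can arise in a $(\mathcal{P},G_{Tr},g,r)$-tiling; I take $\mathcal{U}$ to be the decorated versions of exactly these, which is a finite set with compact members as required. The forward direction is then routine: given a $(\mathcal{P},G_{Tr},g,r)$-tiling $T$, decorate every tile to obtain $T'$. Then $T'$ uses prototiles from $\mathcal{X}$ (placed by isometries in $G_I$), and by the very definition of $\mathcal{U}$ every $1$-corona of $T'$ is a translate of an atlas element, so $T'$ is a $(\mathcal{X},G_I,\mathcal{U})$-tiling; since decoration is applied tile by tile, the procedure is local.

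For the backward direction I would start from a $(\mathcal{X},G_I,\mathcal{U})$-tiling $T'$, read the colour off each decorated facet using injectivity of $\beta$, and so produce a colouring $g$; the atlas condition forces each $1$-corona to reproduce a legal colour configuration, hence forces $r$ across every shared facet, and reading colours is again purely local. The delicate point, which I expect to be the main obstacle, is that $T'$ may use all of $G_I$ whereas the target tiling is only permitted $G_{Tr}$, so I must rule out rotated or reflected placements of the decorated prototiles having no $(\mathcal{P},G_{Tr},g,r)$-preimage. Here the key leverage is that the atlas rule matches coronas \emph{only up to translation}: every permitted $1$-corona of $T'$ is therefore literally a translate of a genuine corona, which already pins each central tile to the orientation of a genuine translate --- provided no accidental symmetry of the decoration lets a rotated or reflected copy of some $X_P$ coincide with a translate of some $X_{P'}$. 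Guaranteeing that $\beta$ is sufficiently asymmetric to exclude every such coincidence, and hence that no spurious corona can be assembled from reoriented tiles, is the crux of the argument and the step demanding the most care, especially in $\mathbb{R}^3$ and when prototiles have been merged under isometry.
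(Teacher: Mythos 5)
Your route is genuinely different from the paper's, and it has one step that fails outright. You encode each colour $c$ as a carved pattern $\beta(c)$ of bumps and notches on the facet itself, and require the decorated supports still to fit together wherever the undecorated ones did. But along a shared $(n-1)$-dimensional facet, once one tile's carving is fixed, the neighbouring tile's shape near that facet is forced to be exactly the closure of the complement of the first tile in a neighbourhood of the facet. Hence if a colour $c$ is $r$-compatible with two distinct colours $c'$ and $c''$ --- which the general facet rule of Definition 2 allows, and which is the typical situation --- then $\beta(c')$ and $\beta(c'')$ must both equal that complement and so must coincide, contradicting your requirement (i) that distinct colours be recoverable from the facet. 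The carving trick works only for the identical facet rule. The paper avoids this entirely: the supports are never modified, matching is enforced solely by the atlas of permitted $1$-coronas, and the only decoration is an interior label with uncoloured boundary (hence no geometric or facet-rule consequences). Separately, your justification for finiteness of the atlas (``boundedly many tiles per corona'') does not rule out continuously varying offsets between adjacent tiles; finite local complexity is being assumed, though the paper is equally silent on this.

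The more consequential divergence is where the saving in $|\mathcal{X}|$ comes from. You identify $X_P$ with $X_{P'}$ only when they are isometric \emph{as decorated objects}, i.e.\ when shape and colouring match simultaneously under one isometry. The paper's Construction 1 groups prototiles by the symmetry of their \emph{supports alone}: a class $\mathcal{P}_s$ of prototiles sharing a support with symmetry group $G_s$ is encoded injectively into pairs (representative, element of $G_s$), so each original prototile becomes a distinct \emph{orientation} of one of only $\lceil |\mathcal{P}_s|/|G_s|\rceil$ asymmetrically labelled representatives, and the discarded colour information is remembered entirely by the atlas. No isometry relating the colourings of $P$ and $P'$ is ever required. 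That is what the Remark's count, the Corollary, and both applications depend on: under your identification, Culik's $13$ Wang tiles and Kari's $21$ Wang cubes would generically still yield $13$ and $21$ prototiles, not $2$ and $1$. So while your argument (with the carving repaired) would deliver the literal inequality $|\mathcal{X}|\le|\mathcal{P}|$, it does not prove the construction the theorem is actually being used for. Finally, the ``crux'' you flag is misplaced: in your setup a reoriented copy of $X_{P'}$ coinciding with a translate of $X_P$ decodes to the same coloured tile either way, so no asymmetry of $\beta$ is needed there; in the paper's setup an asymmetric interior label \emph{is} essential, but for a different reason, namely so that the $G_s$-action on the representative is free and the orientation (hence the original prototile) can be read back locally.
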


\begin{construction}
Take $\mathcal{P}$ and partition it into a set of equivalence classes $\mathcal{P}= \coprod \mathcal{P}_{s}$, $s \in \{1, \ldots, m\}$ where $P_{i} \sim P_{j}$ iff $\emph{supp}(P_{i})=\emph{supp}(P_{j})$ up to the action of an element of $G_{Tr}$.
For all $\mathcal{P}_{s}$, let $G_{s}$ be the largest subgroup of $G_{I} / G_{Tr}$ such that for all $f \in G_{s}$ and all $P \in \mathcal{P}_{s}$, $f(P) \sim P$.

Enumerate the elements of $\mathcal{P}_{s}$ as $P_{1}^{s}, \ldots, P_{r}^{s} \in \mathcal{P}_{s}$.

Choose the smallest $k$ you can so as to construct an injective function $e_{s}:\mathcal{P}_{s} \to \{(P_{i}^{s} , g_{s}) |  1 \leq i \leq k , g_{s} \in G_{s}\}$.
Define $\mathcal{X}_{s} = \{P_{1}^{s}, \ldots , P_{k}^{s} \}$.
\end{construction}

We now have a construction taking prototiles $P_{i}^{s} \in \mathcal{P}_{s}$ to ordered pairs of a prototile from $\mathcal{X}_{s}$ and an automorphism of that prototile. Observe that $\mathcal{X}_{s}$ is a subset of $\mathcal{P}_{s}$.
\\
\\
\emph{Proof of Theorem 1} 

Define a new prototile set $\mathcal{X}=\mathcal{X}_{1}\cup \ldots \cup \mathcal{X}_{m}$, where $\mathcal{X}_{s}$ is as just defined. Let the set of allowable functions from the prototiles into $\mathbb{R}^{n}$ be $G_{I}$, instead of $G_{Tr}$.
Take the set of allowable $1$-coronas in the $(\mathcal{P},G_{Tr},g,r)$-tiling $T$, and replace every tile originating from a translation of a prototile $P_{i}^{s} \in \mathcal{P}_{s}$ with $g_{s}(P_{j}^{s})$, with $g_{s}$ and $P_{j}^{s}$ originating from $e_{s}(P_{i}^{s})=(g_{s},P_{j}^{s})$.
This will give you a set of $1$-corona patches of $\mathcal{X}$. Use this set as the atlas rule $\mathcal{U}$ for $\mathcal{X}$.

$T$ has facet rules, which are intrinsic to the set of allowable first coronas (since the set of allowable first coronas list what boundaries are allowed to meet each other).
Since our definition of $\mathcal{X}$ and its atlas correspond to the first coronas of tiles in $T$, with $P_{i}^{s}$ replaced by $g_{s}(P_{j}^{s})$, any tiling by $\mathcal{X}$ is MLD to a tiling from $\mathcal{P}$.
Since $|\mathcal{X}_{s}| \leq |\mathcal{P}_{s}|$ then $|\mathcal{X}| \leq |\mathcal{P}|$. \hfill $\square$

\begin{cor}
Take a prototile set $\mathcal{P}$ and partition it into a set of equivalence classes $\mathcal{P}= \coprod \mathcal{P}_{s}$, $s \in \{1, \ldots, m\}$ as in the previous construction.
If there exists $\mathcal{P}_{s}$ such that $|\mathcal{X}_{s}| < |\mathcal{P}_{s}|$, there exists a prototile set (with atlas rules) which tiles $\mathbb{R}^{n}$ with less prototiles than $\mathcal{P}$.

\end{cor}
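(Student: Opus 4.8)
The plan is to reduce the statement to a simple counting observation and then invoke Theorem 1. Since $\mathcal{P}$ is assumed to tile $\mathbb{R}^{n}$, I would fix a $(\mathcal{P},G_{Tr},g,r)$-tiling $T$. By Theorem 1, $T$ is MLD to a $(\mathcal{X},G_{I},\mathcal{U})$-tiling $T'$, where $\mathcal{X}=\mathcal{X}_{1}\cup\cdots\cup\mathcal{X}_{m}$ is exactly the prototile set produced by the Construction and $\mathcal{U}$ is a $1$-corona atlas rule. Since an MLD image of a tiling of $\mathbb{R}^{n}$ is again a tiling of $\mathbb{R}^{n}$, the tiling $T'$ covers $\mathbb{R}^{n}$; thus $\mathcal{X}$, equipped with $\mathcal{U}$, does tile $\mathbb{R}^{n}$. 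It therefore remains only to show the strict inequality $|\mathcal{X}|<|\mathcal{P}|$.

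First I would record that the partition is into disjoint classes, so that $|\mathcal{P}|=\sum_{s=1}^{m}|\mathcal{P}_{s}|$, and likewise $|\mathcal{X}|=\sum_{s=1}^{m}|\mathcal{X}_{s}|$; here one checks that the $\mathcal{X}_{s}$ remain disjoint across distinct $s$, which holds because each $\mathcal{X}_{s}\subseteq\mathcal{P}_{s}$ and the $\mathcal{P}_{s}$ are pairwise disjoint. Next I would establish the per-class bound $|\mathcal{X}_{s}|\leq|\mathcal{P}_{s}|$ for every $s$: in the Construction $\mathcal{X}_{s}=\{P_{1}^{s},\ldots,P_{k}^{s}\}$ with $k$ chosen minimal so that an injection $e_{s}:\mathcal{P}_{s}\to\{(P_{i}^{s},g_{s}) : 1\leq i\leq k,\ g_{s}\in G_{s}\}$ exists. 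Taking $k=|\mathcal{P}_{s}|$ together with $g_{s}=\mathrm{id}$ always yields such an injection (via $P_{i}^{s}\mapsto(P_{i}^{s},\mathrm{id})$), so the minimal $k$ satisfies $k\leq|\mathcal{P}_{s}|$, i.e.\ $|\mathcal{X}_{s}|\leq|\mathcal{P}_{s}|$.

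Finally I would combine these facts. Summing the inequalities $|\mathcal{X}_{s}|\leq|\mathcal{P}_{s}|$ over $s$, and using the hypothesis that there is an $s_{0}$ with $|\mathcal{X}_{s_{0}}|<|\mathcal{P}_{s_{0}}|$ so that at least one summand is strict, gives $|\mathcal{X}|=\sum_{s}|\mathcal{X}_{s}|<\sum_{s}|\mathcal{P}_{s}|=|\mathcal{P}|$. Hence $\mathcal{X}$ is a prototile set with atlas rules that tiles $\mathbb{R}^{n}$ using strictly fewer prototiles than $\mathcal{P}$, as claimed.

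I expect the only real subtlety, rather than a genuine obstacle, to be bookkeeping: one must confirm that passing to atlas rules does not inadvertently reintroduce tile types (it does not, since $\mathcal{X}$ is fixed by the Construction independently of $\mathcal{U}$), and that a strict decrease in a single class cannot be cancelled by an increase elsewhere (it cannot, as every class contributes $|\mathcal{X}_{s}|\leq|\mathcal{P}_{s}|$). The substantive work is carried entirely by Theorem 1; the corollary itself is a decomposition-and-counting argument.
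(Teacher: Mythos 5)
Your proposal is correct and follows the same route as the paper: the paper's own proof is just the one-line observation that $|\mathcal{X}_{s}|<|\mathcal{P}_{s}|$ for some $s$ forces $|\mathcal{X}|<|\mathcal{P}|$, with Theorem 1 implicitly supplying that $\mathcal{X}$ still tiles $\mathbb{R}^{n}$. You simply make explicit the bookkeeping (disjointness of the classes, the per-class bound $|\mathcal{X}_{s}|\leq|\mathcal{P}_{s}|$, and the appeal to MLD equivalence) that the paper leaves unstated.
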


\begin{proof}
We know that $|\mathcal{X}_{s}| < |\mathcal{P}_{s}|$, thus $|\mathcal{X}| < |\mathcal{P}|$.
\end{proof}

\begin{remark}
This method of construction produces a prototile set with cardinality $\sum_{s=1}^{m} \lceil \frac{|\mathcal{P}_{s}|}{|G_{s}|} \rceil$.
\end{remark}

\begin{remark}
$\mathcal{P}$ is strongly aperiodic iff $\mathcal{X}$ is strongly aperiodic. (See \cite{STRAUSSSURVEY} for definition).
This is because every tiling in $\mathcal{X}$ is MLD to a tiling in $\mathcal{P}$, and strong aperiodicity is preserved under MLD equivalency.
\end{remark}

\section{Motivating examples and further improvements}

\begin{example}
For a simple illustration of the method, let us consider a tiling of the plane by 13 Wang tiles (unit squares with matching rules defined by matching coloured edges) as given in \cite{CULIK, KARI}.
Label the Wang tiles as $\{Q_{1}, \ldots, Q_{13} \}$. We can apply the above construction to get a function from $\{Q_{j}\}_{j=1}^{13}$ to $\{(P_{i}, r) | 1 \leq i \leq 2 , r \in D_{4}\}$, where $D_{4}$ is the group of symmetries of the square.

For example, enumerate the symmetries of the square as $\{r_{1}, r_{2}, \ldots, r_{8}\}$.
Then such a function could send $\{ Q_{j} | 1 \leq j \leq 8\}$ to $r_{j}(P_{1})$, and the remaining tiles $\{Q_{j} | 9 \leq j \leq 13\}$ to $r_{j-8}(P_{2})$.
The result is shown in diagram \ref{wangexample}, for a small patch of the tiling.

\end{example}

As is common with Wang tiles, the colouring of $\{Q_{j}\}_{j=1}^{13}$ is represented as actual colours superimposed onto the tile. We represent the change of prototile set from $\{Q_{j}\}_{j=1}^{13}$ to $\{P_{1}, P_{2}\}$ by adding a colouring to $P_{1}$ and $P_{2}$, which looks like their alphabetical symbols. This colouring has uncoloured points on the exterior of the tile (and thus no effect on the matching rules), but admits a free action by the symmetry group of a square.

\begin{figure}[!hbtp]
\includegraphics[angle=0, width=0.8\textwidth]{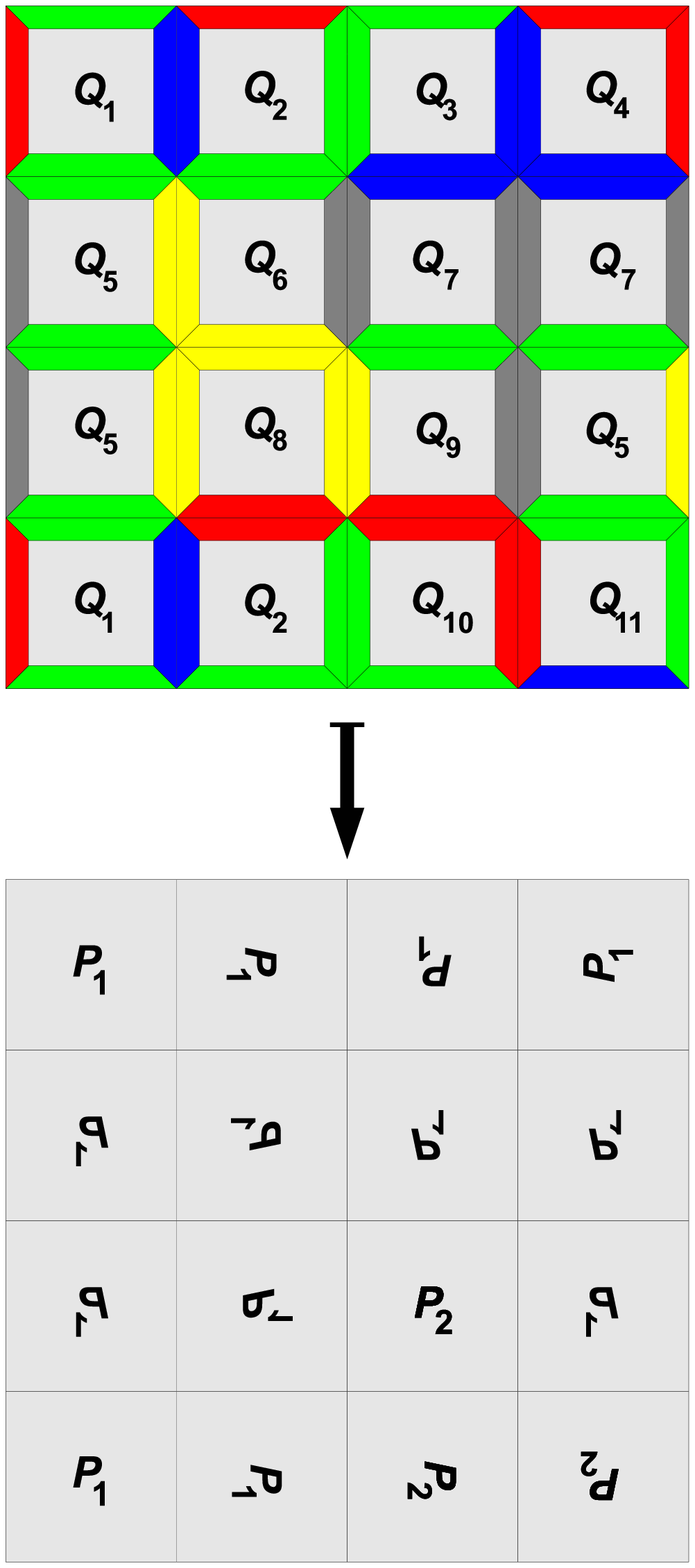}
\caption{Top picture shows a tiling with prototiles $Q_{1}, \ldots , Q_{13}$ with facet matching rules, and translation as an isometry group. \label{wangexample}
Bottom picture uses a two element prototile set, with rotations, reflections and translations as a isometry group.}
\end{figure}

\begin{example}
Consider Kari's Wang cube prototiles, $W$ \cite{KARICUBE}.
This is a set of 21 unit cube prototiles, with facet matching rules that only tile $\mathbb{R}^{3}$ aperiodically.

Choose a unit cube prototile $A$ which has an asymmetric label;
i.e., for any two distinct isometries of the cube $f,g \in O_{h}$, $f(A) \neq g(A)$.


Since the set of isometries of the cube is of cardinality 48, we can choose 21 unique isometries $i_{k} \in O_{h}$.
We use the method in Construction $1$ to replace $P_{k}$ with $i_{k}(A)$.

Thus we have an aperiodic protoset with one prototile which is MLD to Kari's Wang Cubes.
Note that we have lost the property of matching rules being determined on faces, and replaced them with a set of legal one corona patches (which cannot be rotated or reflected, of course). We have also had to broaden the set of allowable mappings of the prototiles into the tiled space, from translations to translations and rotation/reflections.
\end{example}

%

\begin{remark}
This algorithm can be further improved, by partitioning $\mathcal{P}$ into equivalence classes based on what prototiles have the same support \emph{up to isometry}, not just translation.

Let $T$ be a $(\mathcal{P},G_{Tr},g,r)$-tiling as in Construction $1$. If there is a prototile $P_{i} \in \mathcal{P}$ whose support is a non-trivial isometry of another prototile $P_{j}$ (where $i\neq j$), then the resulting $(\mathcal{X},G_{I},\mathcal{U})$-tiling may have less prototiles than one originating from Construction $1$.

\end{remark}

\begin{construction}
Partition $\mathcal{P}=\coprod \mathcal{P}^{s}$, $s=\{1,\ldots,p\}$ where $P_{i} \sim P_{j}$ iff $\emph{supp}(P_{i})=\emph{supp}(P_{j})$ up to the action of an element of $G_{I}$.

Further partition $\mathcal{P}^{s}=\coprod \mathcal{P}_{t}^{s}$, $t=\{1,\ldots,q\}$ where $P_{a}^{s} \sim P_{b}^{s}$ iff $\emph{supp}(P_{a}^{s})=\emph{supp}(P_{b}^{s})$ up to the action of an element of $G_{Tr}$.

This two-stage partitioning gives us a collection of equivalence classes ($\coprod_{s,t} \mathcal{P}_{t}^{s}$) as per the first construction. Additionally we know that there exist isometries in $G_{I}$ from elements of $\mathcal{P}_{i}^{s}$ to elements of $\mathcal{P}_{j}^{s}$.
Take the $\mathcal{P}_{t}^{s}$ with the largest cardinality and denote it $\mathcal{P}_{T}^{s}$. From the definition of $\mathcal{P}^{s}$ there exists an isometry $\alpha_{P_{i}P_{j}}$ such that $\alpha_{P_{i}P_{j}}(\emph{supp}(P_{i})) = supp(P_{j})$. Furthermore we know that an given isometry can only take elements from one set $\mathcal{P}_{i}^{s}$ to $\mathcal{P}_{T}^{s}$ (by definition of equivalence class).
Thus we can replace any prototile in $\mathcal{P}_{t}^{s}$ with a unique isometry of a prototile in $\mathcal{P}_{T}^{s}$, since $|\mathcal{P}_{t}^{s}| \leq |\mathcal{P}_{T}^{s}|$.

By applying the previous construction to $\mathcal{P}_{T}^{s}$, we can get a minimal uncoloured prototile set $\mathcal{X}^{s}$ that can be used to translate prototiles in $\mathcal{P}_{T}^{s}$, and hence $\mathcal{P}^{s}$, to atlas rules.

\end{construction}

\begin{example}
Take a prototile set $\mathcal{T}$ of equilateral triangles, as shown in figure 2. The prototiles have two different orientations, and three (could be up to six) colours.

You partition $\mathcal{T}$ into $\mathcal{T}=\mathcal{T}_{1}$, since all prototiles in $\mathcal{T}$ have the same support, up to isometry.
You then further partition $\mathcal{T}_{1}=\mathcal{T}_{1}^{1} \coprod \mathcal{T}_{1}^{2}$, where $\mathcal{T}_{1}^{1}$ is the set of prototiles with point upwards, and  $\mathcal{T}_{1}^{2}$ is the set of prototiles with point downwards. Denote the first prototile of $\mathcal{T}_{1}^{1}$ as $t_{1}$.
Applying the first construction to  $\mathcal{T}_{1}^{1}$ gives you $f(P_{i} \in P_{1}^{1})= d_{i}(t_{1})$, for $d_{i} \in D_{3}$, and $f(P_{i} \in P_{1}^{2})= \emph{rot}_{\frac{\pi}{3}}(d_{i}(t_{1}))$.
While this is sufficient to define the tiling, it has the problem that any picture of the tiling needs to include information about the isometries used for each tile. Thus we replace $t_{1}$ with a tile $x$ with an uncoloured boundary, but with a coloured interior which is not preserved under any non-identity element of $D_{3}$.

\begin{figure}[!hbtp]
\includegraphics[angle=90, width=0.8\textwidth]{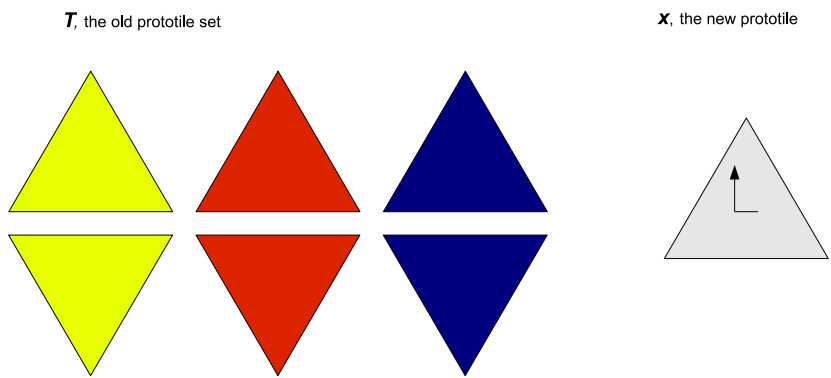}
\caption{New and old prototile set}
\end{figure}
\end{example}

\section{Acknowledgements}
Thanks are due to Chaim Goodman-Strauss, Joshua E. S. Socolar and Edmund Harriss for helpful conversations on this paper. My supervisor John Hunton has assisted considerably with improving the readability of this paper.
We also thank the University of Leicester and EPSRC for a doctoral fellowship. The results of this article will form part of the author's PhD thesis.


\begin{thebibliography}{99}

\bibitem{GUMMELT} \lq Penrose tilings as coverings of congruent decagons', Petra Gummelt, Geometriae Dedicata Volume 62, Number 1 / August, 1996. doi:10.1007/BF00239998.

\bibitem{STRAUSSSURVEY} `Open Questions in Tilings', Chaim Goodman-Strauss
January 10, 2000  http://comp.uark.edu/~strauss/papers/survey.pdf

\bibitem{CULIK} \lq An aperiodic set of 13 Wang tiles', Karel Culik II, Discrete Mathematics, Volume 160, Issues 1-3, 15 November 1996, Pages 245-251

\bibitem{KARI} \lq A small aperiodic set of Wang tiles', Jarkko Kari,  Discrete Mathematics, Volume 160, Issues 1-3, 15 November 1996, Pages 259-264

\bibitem{KARICUBE} `An aperiodic set of Wang cubes', Jarkko Kari, Karel Culik, Journal of Universal Computer Science, vol. 1, no. 10 (1995), 675-686

\bibitem{COHOMOBDHS} `Cohomology of Substitution Tiling Spaces', Marcy Barge, Beverly Diamond, John Hunton, Lorenzo Sadun, Ergodic Theory and Dynamical Systems, doi:10.1017/S0143385709000777

\bibitem{BERGER} \lq The undecidability of the domino problem', Berger, R.,  Memoirs Amer. Math. Soc. 66(1966)

\bibitem{WANG} \lq Proving theorems by pattern recognition II', Wang, Hao (January 1961), Bell System Tech. Journal 40(1):1–41.

\bibitem{SOCOLAR} \lq More ways to tile with only one shape polygon', Joshua E. S. Socolar, The Mathematical Intelligencer, Volume 29, Number 2 / June, 2007, doi:10.1007/BF02986203

\bibitem{ROBIN} \lq Undecidability and Nonperiodicity for Tilings of the Plane', R. M. Robinson, Inventiones Mathematicae, 12(3), 1971 pp. 177–209.

\bibitem{PENROSE} \lq Role of aesthetics in pure and applied research', Penrose, Roger (1974), Bulletin of the Institute of Mathematics and its Applications 10
\end{thebibliography}
\end{document}